\theoremstyle{plain}
\newtheorem{theorem}{Theorem}[section]
\newtheorem{lemma}[theorem]{Lemma}
\theoremstyle{definition}
\theoremstyle{remark}
\newtheorem{remark}{Remark}[section]
\newcommand{\R}{\mathbb{R}}
\newcommand{\Z}{\mathbb{Z}}
\newcommand{\C}{\mathbb{C}}
\newcommand{\vev}[1]{\left\langle#1\right\rangle}
\renewcommand{\phi}{\varphi}
\newcommand{\abs}[1]{| #1 |}
\newcommand{\norm}[1]{\Vert #1 \Vert}
\DeclareMathOperator{\supp}{supp}
\DeclareMathOperator{\id}{Id}
\title[Fourier magnitude data and Sobolev embeddings]{A note on the Fourier magnitude data and Sobolev embeddings}
\author[J. Railo]{Jesse Railo}
\address{Computational Engineering, School of Engineering Sciences,
Lappeenranta-Lahti University of Technology LUT, Lappeenranta, Finland}
\email{jesse.railo@lut.fi}
\date{April 20, 2024}
\begin{document}

\begin{abstract} We study Sobolev $H^s(\R^n)$, $s \in \R$, stability of the Fourier phase problem to recover $f$ from the knowledge of $\abs{\hat{f}}$ with an additional Bessel potential $H^{t,p}(\R^n)$ a priori estimate when $t \in \R$ and $p \in [1,2]$. These estimates are related to the ones studied recently by Steinerberger in "On the stability of Fourier phase retrieval" \emph{J. Fourier
Anal. Appl.}, 28(2):29, 2022. While our estimates in general are different, they share some comparable special cases and the main improvement given here is that we can remove an additional imaginary term and obtain sharper constants. We also consider these estimates for the quotient distances related to the non-uniqueness of the Fourier phase problem. Our arguments closely follow the Fourier analysis proof of the Sobolev embeddings for Bessel potential spaces with minor modifications.
\end{abstract}
\keywords{Phase retrieval, Fourier transform, Bessel potentials, Sobolev inequalities}
\subjclass[2020]{42B10, 46E35}
\maketitle

\subsection*{Acknowledgements} The author was supported by the Research Council of Finland through the Flagship of Advanced Mathematics for Sensing, Imaging and Modelling (decision number 359183). The author acknowledges support by the Swiss National Science Foundation (SNF Grant 184698) and Rima Alaifari at ETH Zürich in 2020--2021. The author is grateful to Stefan Steinerberger for comments related to \cite{Stei20-stability} and this note.

\section{Introduction}

The Fourier phase problem studies the recovery of information about a function $f$ from the knowledge of the magnitude of its Fourier transform $\abs{\hat{f}}$ where the Fourier transform is defined with the convention
\begin{equation}
    \hat{f}(\xi) := \mathcal{F}f(\xi) := (2\pi)^{-n/2}\int_{\R^n} e^{-ix \cdot \xi}f(x)dx.
\end{equation} This problem has non-unique solutions for several reasons. For example, $f$ and $cf$ have the same Fourier magnitude data for any constant $c \in \C$ with $\abs{c}=1$. See the discussion in Section \ref{sec:quotientDist}, Appendix \ref{sec:genQuotientL2}, and the survey \cite{GKR20-PhaseRetSurvey} for other obstructions of uniqueness.

The first systematic mathematical studies of Fourier phase retrieval are due to Akutowicz \cite{Aku56-PhaseretI,Aku57-PhaseretII}, Walther \cite{Wal63-PhaseRet}, and Hofstetter \cite{Hof64-PhaseRet}, where the uniqueness questions were considered in one dimension for compactly supported functions. Possible imaging applications include X-ray crystallography, transmission electron microscopy, and coherent diffractive imaging \cite{BBE17-ApplPhaseRetSurvey,Fie82-Appl,GKR20-PhaseRetSurvey}. Other works on the uniqueness properties of the Fourier phase problem include, for example,  \cite{CF83-FourierPhaseRetUniq,KST95-FourierPhaseRet}.

In this paper, we consider inequalities related to the stability of Fourier phase retrieval. Our main result gives mixed estimates for general $H^s(\R^n)$, $s \in \R$, quotient distances related to the non-uniqueness of the Fourier phase problem using the Fourier magnitude difference data $\abs{\abs{\hat{f}}-\abs{\hat{g}}}$ and certain a priori estimates given in terms of the Bessel potential norms $H^{t,p}(\R^n)$, $p \in [1,2]$, $t \in \R$. This result is stated in Theorem \ref{thm:HsBesselGroupthm}. This improves some of the recent results in \cite{Stei20-stability} as explained in the discussion of Section \ref{sec:motivation}. One may remove the a priori estimates if $\supp(\hat{f}) \neq \supp(\hat{g})$ but such estimates seem to hold for almost trivial reasons and without good control of constants, as Appendix \ref{sec:appendix1} demonstrates.

The question of stability for Fourier phase retrieval has been studied recently in \cite[Theorem 4.13]{GKR20-PhaseRetSurvey} with additional measurements and in \cite{Stei20-stability} with additional a priori information. The stability of other or general phase retrieval problems has been a very active topic of study in the last few years. See, for example, \cite{ADGY19-StabPhaseRetInfDim,AG17-PhaseRetBanachFrames,AG21-GaborPhaseRetIllPosed,AW21-StabDiscGabPhaseRet,BW15-GeneralPhaseRetStab,BCMN14-SavingPhase,CCD16-PhaseRetInfHilbSpace,CDL20-StabPhaseLocStabCondConnected,GR19-StabGaborPhaseRet,GR19-StabGaborPhaseRetMultivar,IMP19-LowerLipBoundsPhaseRet} and the references therein.

We introduce the main result in Section \ref{sec:mainres}. We consider preliminaries in Section \ref{sec:preliminaries}. We give the proof of our main theorem in Section \ref{sec:mainthm}. We consider other aspects of the stability of the Fourier phase problem in Appendices \ref{sec:appendix1} and \ref{sec:genQuotientL2} in some simpler special cases. %Appendix \ref{sec:appendixProof} includes an alternative proof of Lemma \ref{lem:H^slemma}.

\subsection{Main result} \label{sec:mainres}
We briefly introduce the notation used first and then state our main theorem. More details are given in Section \ref{sec:preliminaries}. We denote $\vev{\xi} = \sqrt{1+\abs{\xi}^2}$ for any $\xi \in \R^n$ and define \emph{Bessel potential spaces} for any $t \in \R$ and $p \in [1,\infty]$ as
\begin{equation}
    H^{t,p}(\R^n) := \{\, f \in \mathcal{S}'(\R^n)\,;\, \mathcal{F}^{-1}(\vev{\cdot}^t \hat{f}) \in L^p(\R^n)\,\}
\end{equation}
equipped with the norm $\norm{f}_{H^{t,p}(\R^n)} := \norm{\mathcal{F}^{-1}(\vev{\cdot}^t \hat{f})}_p$. For the space $H^s(\R^n)$, we directly use the Fourier side norm $\norm{\vev{\cdot}^s \hat{f}}_2$, which is equal to the norm of $H^{s,2}(\R^n)$ up to a positive constant. Furthermore, for  $f \in \mathcal{S}'(\R^n)$, we use the convention that $\norm{f}_{H^{t,p}(\R^n)}=\infty$ if $f \notin H^{t,p}(\R^n)$.

We denote by $\mathcal{F}^s(\R^n)$ the group of bijective and additive operators (not necessarily linear) $P: H^s(\R^n) \to H^s(\R^n)$ with the property \begin{equation} \abs{\hat{f}(\xi)} = \abs{\widehat{Pf}(\xi)} \quad \forall \xi \in \R^n, \quad f \in H^s(\R^n).\end{equation} The related group operation is the usual composition of mappings. The group is $\mathcal{F}^s(\R^n)$ is also known as the group of trivial solutions of the Fourier phase problem. If $G$ is a subgroup of $\mathcal{F}^s(\R^n)$, then the distance of the related quotient is defined as 
\begin{equation} d ([f], [g]) = \inf_{P \in G} \norm{f-Pg}_{H^s(\R^n)} 
\end{equation}
for the equivalence classes of $f,g \in H^s(\R^n)$: $[f]=[g]$ if and only if $f=Pg$ for some $P \in G$. See Section \ref{sec:quotientDist} for a brief discussion of the non-uniqueness of the Fourier phase problem.

Our main theorem is an estimate which combines the fractional Sobolev inequalities with the Fourier magnitude data. We write $$A_{f \cap g} = \supp(\hat{f})\cap\supp(\hat{g})$$ and use the convention $\frac{2}{2-2} = \infty$ throughout the article. We also write $$c_{n,p} := \norm{\mathcal{F}}_{L^p \to L^{p/(p-1)}}^2$$ for the squared operator norm of the Fourier transform. Note that $c_{n,p}\leq1$.

\begin{theorem}\label{thm:HsBesselGroupthm} Let $n \in \Z_+$, $t, s\in \R$ and $p \in [1,2]$. Let $G \subset \mathcal{F}^s(\R^n)$ be a subgroup. Suppose that $f,g \in H^s(\R^n)$. Then
\begin{equation}
    d([f],[g])^2 \leq \norm{\vev{\xi}^s(|\hat{f}|-\abs{\hat{g}})}_2^2 + R(f,g)
\end{equation}
where
\begin{equation}
R(f,g) := c_{n,p}\norm{\chi_{A_{f \cap g}}\vev{\xi}^{2s-2t}}_{\frac{p}{2-p}}\inf_{P,Q \in G}\norm{Pf-Qg}_{H^{t,p}(\R^n)}^2
\end{equation}
depends on $n,p,t,s$ and $G$.

The constant $\norm{\chi_{A_{f \cap g}}\vev{\xi}^{2s-2t}}_{\frac{p}{2-p}}$ is finite if one of the following sufficient conditions hold:
\begin{enumerate}[label=(\roman*)]
\item $A_{f \cap g}$ is compact.
\item $A_{f \cap g}$ has finite measure and $s \leq t$. 
\item $s < t-a$ where $a = n(\frac{1}{p}-\frac{1}{2})$.
\end{enumerate}
\end{theorem}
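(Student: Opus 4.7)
The plan is to bound $\norm{Pf - Qg}_{H^s(\R^n)}^2$ for an arbitrary pair $P, Q \in G$ and then infimise over $G$. Since every $P \in G$ preserves Fourier magnitudes, Plancherel makes it an $H^s$-isometry; combined with additivity (which forces $Q(0)=0$ and hence $Q(f-h) = Qf - Qh$), this gives $\norm{Qf - Pg}_{H^s} = \norm{f - Q^{-1}Pg}_{H^s}$. As $G$ is a group, $\{Q^{-1}P : P, Q \in G\} = G$, so $\inf_{P,Q \in G}\norm{Qf - Pg}_{H^s} = d([f],[g])$, and it suffices to produce an upper bound of the required form for each pair.

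The computational core is a pointwise magnitude--phase identity. Writing $\widehat{Pf}(\xi) = \abs{\hat{f}(\xi)}e^{i\alpha(\xi)}$ and $\widehat{Qg}(\xi) = \abs{\hat{g}(\xi)}e^{i\beta(\xi)}$ with measurable phases, a direct expansion yields
\begin{equation}
\abs{\widehat{Pf}(\xi) - \widehat{Qg}(\xi)}^2 = \bigl(\abs{\hat{f}(\xi)} - \abs{\hat{g}(\xi)}\bigr)^2 + 4\abs{\hat{f}(\xi)}\abs{\hat{g}(\xi)}\sin^2\!\Bigl(\tfrac{\alpha(\xi) - \beta(\xi)}{2}\Bigr).
\end{equation}
The second summand is nonnegative, supported in $A_{f \cap g}$, and bounded above by $\abs{\widehat{Pf - Qg}(\xi)}^2$ itself. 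Using Plancherel and splitting $\vev{\xi}^{2s} = \vev{\xi}^{2s-2t}\vev{\xi}^{2t}$, I obtain
\begin{equation}
\norm{Pf - Qg}_{H^s}^2 \leq \norm{\vev{\xi}^s(\abs{\hat{f}} - \abs{\hat{g}})}_2^2 + \int_{A_{f \cap g}} \vev{\xi}^{2s-2t}\,\vev{\xi}^{2t}\abs{\widehat{Pf - Qg}(\xi)}^2 \der\xi.
\end{equation}

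The remainder is then controlled by H\"older's inequality with conjugate exponents $r = \frac{p}{2-p}$ and $r' = \frac{p}{2(p-1)}$ (chosen so that $2r' = p/(p-1)$), which bounds it by
\begin{equation}
\norm{\chi_{A_{f \cap g}}\vev{\xi}^{2s-2t}}_{p/(2-p)}\norm{\vev{\xi}^t\widehat{Pf - Qg}}_{p/(p-1)}^2.
\end{equation}
The Hausdorff--Young inequality -- valid precisely for $p \in [1,2]$, which is the reason for that hypothesis -- bounds the second factor above by $c_{n,p}\norm{Pf - Qg}_{H^{t,p}(\R^n)}^2$. Combining the estimates and taking $\inf_{P, Q \in G}$ on both sides (using that the first summand does not depend on $(P, Q)$) yields the main inequality.

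The three sufficient conditions for finiteness of $\norm{\chi_{A_{f \cap g}}\vev{\xi}^{2s-2t}}_{p/(2-p)}$ follow from direct computation: in (i) the continuous weight is bounded on the compact set, which has finite measure; in (ii) $s \leq t$ yields $\vev{\xi}^{2s-2t} \leq 1$, so the norm is at most $\abs{A_{f \cap g}}^{(2-p)/p}$; in (iii) the full integral $\int_{\R^n}\vev{\xi}^{(2s-2t)p/(2-p)}\der\xi$ converges precisely when $(2s-2t)p/(2-p) < -n$, which rearranges to $s < t - n(\tfrac{1}{p} - \tfrac{1}{2}) = t - a$. The edge case $p = 2$ (where $p/(2-p) = \infty$) is handled by the essential-supremum analogue. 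The main obstacle is bookkeeping the H\"older exponents so that the Hausdorff--Young exponent $p/(p-1)$ emerges correctly; the apparently lossy step of estimating $4\abs{\hat{f}}\abs{\hat{g}}\sin^2(\cdot) \leq \abs{\widehat{Pf - Qg}}^2$ is actually the key move, since it is what recasts the remainder as a norm of $Pf - Qg$ to which the a priori $H^{t,p}$ estimate applies.
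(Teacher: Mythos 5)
Your proof is correct and follows essentially the same route as the paper: split $\norm{Pf-Qg}_{H^s}^2$ into the magnitude-difference term plus a remainder supported on $A_{f\cap g}$, control the remainder by H\"older with exponents $\frac{p}{2-p}$, $\frac{p}{2(p-1)}$ followed by Hausdorff--Young, and then reduce the quotient-distance statement to this pairwise bound using that additive magnitude-preserving bijections are $H^s$-isometries. The only (welcome) addition is that your magnitude--phase identity supplies an explicit proof of the splitting step, which the paper states as Lemma \ref{lem:H^slemma} with proof omitted.
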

\begin{remark}
If $G=\{\id\}$, then $d([f],[g]) = \norm{f-g}_{H^s(\R^n)}$ on the left-hand side and $P=Q=\id$ on the right-hand side. See the formulation given in Lemma \ref{lem:HsBesselthm}.
\end{remark}

Note that the sharp constants for the Fourier transform are obtained in an article by Beckner in 1975 \cite[Theorem 1]{Beckner-annals} but the normalization of the Fourier transform is not the same as used in this article. Therefore, we do not present the explicit constant $c_{n,p}$ here, but simply indicate that it could be obtained. Practically, one could simply replace $c_{n,p}$ by $1$ everywhere and the estimates presented in this article would still hold. We discuss some special cases in Remarks \ref{rmk:finitecase}, \ref{rmk:simplecase} and \ref{rmk:simplecase2}.

\begin{remark}\label{rmk:finitecase} If $\supp(\hat{f})$ has finite measure and $s \leq t$, then the constant can be chosen to be such that it does not depend on particular $g$ since $\abs{A_{f \cap g}} \leq \abs{\supp(\hat{f})}$.
\end{remark}

\begin{remark}\label{rmk:simplecase}
If we estimate when $s,t=0$ and $G = \{\id\}$, then we can recover the estimate
\begin{equation}
    \norm{f-g}_2^2 \leq \norm{\abs{\hat{f}}-\abs{\hat{g}}}_2^2 + c\abs{\supp(\hat{f}) \cap \supp(\hat{g})}^{\frac{2-p}{p}}\norm{f-g}_p^2.
\end{equation} If additionally $p=1$, then
\begin{equation}
    \norm{f-g}_2^2 \leq \norm{\abs{\hat{f}}-\abs{\hat{g}}}_2^2 + (2\pi)^{-n}\abs{\supp(\hat{f}) \cap \supp(\hat{g})}\norm{f-g}_1^2.
\end{equation}
\end{remark}

\begin{remark}\label{rmk:simplecase2}
If we estimate when $G = \{\,w \,;\, w \in S^1 \subset \C\}$, then we can recover the estimate
\begin{equation}
\begin{split}
    \inf_{w \in S^1} \norm{f-wg}_{H^s(\R^n)}^2
    &\leq \norm{\vev{\xi}^s(|\hat{f}|-\abs{\hat{g}})}_2^2 \\
    &\quad+ c_{n,p}\norm{\chi_{A_{f \cap g}}\vev{\xi}^{2s-2t}}_{\frac{p}{2-p}}\inf_{w \in S^1}\norm{f-wg}_{H^{t,p}(\R^n)}^2
    \end{split}
\end{equation}
for $p \in [1,2]$ and $t \in \R$. We additionally used here that $w: f \mapsto wf$ is an isometry also on $H^{t,p}(\R^n)$.
\end{remark}

The proof of Theorem \ref{thm:HsBesselGroupthm} is given in Section \ref{sec:mainthm} but we outline it next. Notice that if $f \in L^1(\R^n)\cap L^2(\R^n)$ is such that the support of $\hat{f}$ has measure $L$, then
\begin{equation}\label{eq:basicestimate}
    \norm{f}_{L^2(\R^n)}^2 = \norm{\hat{f}}_{L^2(\R^n)}^2 \leq L\norm{\hat{f}}_{L^\infty(\R^n)}^2 \leq (2\pi)^{-n}L\norm{f}_{L^1(\R^n)}^2.
\end{equation}
We take a few additional steps to deal with the difference of magnitudes of the Fourier transforms of two functions. The fractional Sobolev norm estimates are based on typical manipulations of the related Fourier multipliers. The inequality for the quotient distances follows from the case $G = \{\id\}$, and hence this basic case can be considered to be more important.

We make a few remarks about the estimate of Theorem \ref{thm:HsBesselGroupthm} as it looks similar to the fractional Sobolev inequalities for Bessel potential spaces \cite{TRI-interpolation-function-spaces} for $q \geq p$:
\begin{itemize}
\item If $t-s \geq n(\frac{1}{p}-\frac{1}{q})$, $p,q \in (1,\infty)$, $s,t \in \R$, then
\begin{equation}\label{eq:secondSob}
    \norm{f}_{H^{s,q}(\R^n)} \leq C\norm{f}_{H^{t,p}(\R^n)} \quad \forall f \in H^{t,p}(\R^n).
\end{equation}
    \item If $t-s > n(\frac{1}{p}-\frac{1}{2})$, $p \in [1,2]$, $s, t \in \R$, then
\begin{equation}\label{eq:firstSob}
    \norm{f}_{H^s(\R^n)} \leq C\norm{f}_{H^{t,p}(\R^n)} \quad \forall f \in H^{t,p}(\R^n).
\end{equation}
\end{itemize}
Recall that the embedding \eqref{eq:firstSob}, when $1 \leq p \leq 2$, can be proved similarly to the proof of Lemma \ref{lem:HsBesselthm} using first Parseval's identity and then the Hausdorff--Young and Hölder's inequalities. The usual proof of cases $p > 2$ requires more advanced techniques that utilize the classical theory of interpolation spaces and Fourier multipliers.

\begin{remark} We note that the main result with $s < t-a$ and $G = \{\id\}$ is almost the fractional Sobolev inequality but now there is an improvement by the factor generated by $\chi_{A_{f \cap g}}$ in the coefficient due to the use of the Fourier magnitude data.
\end{remark}

\begin{remark} The case where at least one of the functions $\hat{f}$ or $\hat{g}$ has a support of infinite measure but $\supp(\hat{f}) \cap \supp(\hat{g})$ has finite measure or is compact gives an estimate also when the Sobolev inequalities do not directly apply to the function $f-g$. In this case, the additional Fourier magnitude data permits the estimate of Theorem \ref{thm:HsBesselGroupthm}.
\end{remark}

\subsection{Comparison to the work \cite{Stei20-stability}}\label{sec:motivation} We briefly recall some parts of the work \cite{Stei20-stability} to motivate this note. Let $f \in L^1(\R^n) \cap L^2(\R^n)$ be a function with real-valued Fourier transform. Let \begin{equation}L = \abs{\{\,\xi \in \R^n \,;\, \hat{f}(\xi)\neq0\,\}} < \infty.\end{equation}
Then for any $g \in L^1(\R^n) \cap L^2(\R^n)$ one has the estimate (\cite[Corollary 1]{Stei20-stability})
\begin{equation}\begin{split}
    &\norm{f-g}_{L^2(\R^n)} \\&\quad\leq 2 \norm{|\hat{f}|-|\hat{g}|}_{L^2(\R^n)} + 30 \sqrt{L} \norm{f-g}_{L^1(\R^n)} +2\norm{\Im \hat{g}}_{L^2(\R^n)}.\label{eq:stei20cor}
    \end{split}
\end{equation}
See Remarks \ref{rmk:finitecase} and \ref{rmk:simplecase} for the formulations of Theorem \ref{thm:HsBesselGroupthm} in the most basic cases to make a comparison with \eqref{eq:stei20cor}.

It is said in \cite{Stei20-stability} that the imaginary term in \eqref{eq:stei20cor} accounts for the translation symmetry in the Fourier phase problem. One motivation for writing this note is that since $L^p$ differences are not invariant under translations of only one of the involved functions, this additional imaginary term coming from the translation symmetry of the Fourier phase problem might not be really required in the estimates of this type. Indeed, this is the case as Theorem \ref{thm:HsBesselGroupthm} shows.

In \cite{Stei20-stability} it is argued that the constant $\sqrt{L}$ is optimal in \eqref{eq:stei20cor} up to the independent constant. We give a completely different proof which improves these independent constants and also $L$ can be replaced by a sharper term when we let the coefficient also depend on the other function $g$ (giving the term $L$ as an upper bound independent of $g$). Furthermore, in our approach it does not make any difference if the involved functions or their Fourier transforms are real or complex valued. We indicate a possible direction for future research related to the estimates of this kind: \emph{Could methods in this note be combined with the arguments in \cite{Stei20-stability} to obtain more general inequalities than presented in \cite{Stei20-stability} and here?}

\section{Preliminaries}
\label{sec:preliminaries}

\subsection{Bessel potentials}
We denote $\vev{\xi} = \sqrt{1+\abs{\xi}^2}$ for any $\xi \in \R^n$ and let $\mathcal{S}'(\R^n)$ be the space of \emph{Schwartz distributions} (i.e. tempered distributions). We denote by $\mathcal{F}$ and $\mathcal{F}^{-1}$ the \emph{Fourier transform} and the \emph{inverse Fourier transform} on $\mathcal{S}'(\R^n)$. We may also write $\mathcal{F}f = \hat{f}$ when $f \in \mathcal{S}'(\R^n)$.  

We can define the $L^2$ Sobolev spaces for any $s \in \R$ using the Fourier transform of distributions as \[H^s(\R^n) = \{\,u \in \mathcal{S}'(\R^n) \,;\, \mathcal{F}^{-1}(\vev{\cdot}^{s}\hat{u}) \in L^2(\R^n)\,\}\]
where we equip the space with the norm $\norm{u}_{H^s(\R^n)} := \norm{\vev{\cdot}^{s}\hat{u}}_2$ as the Fourier transform is an isometry on $L^2(\R^n)$.

We define \emph{Bessel potential spaces} for any $t \in \R$ and $p \in [1,\infty]$ as
\begin{equation}
    H^{t,p}(\R^n) := \{\, f \in \mathcal{S}'(\R^n)\,;\, \mathcal{F}^{-1}(\vev{\cdot}^t \hat{f}) \in L^p(\R^n)\,\}
\end{equation}
equipped with the norm $\norm{f}_{H^{t,p}(\R^n)} := \norm{\mathcal{F}^{-1}(\vev{\cdot}^t \hat{f})}_p$. These are Banach spaces for any $p \in [1,\infty]$ and $\mathcal{S}(\R^n)$ is dense in $H^{t,p}(\R^n)$ if $p \in [1,\infty)$. If $f \in \mathcal{S}'(\R^n)$ and $f \notin H^{t,p}(\R^n)$, then we define $\norm{f}_{H^{t,p}(\R^n)}=\infty$.

%\begin{remark} The norms of $H^s(\R^n)$ and $H^{s,2}(\R^n)$ as defined here are different by a constant depending on the normalization of the Fourier transform.
%\end{remark}

One may recall some basic facts of Bessel potentials and related topics, for example, from \cite{Fri98-intro-dist-theory-book,HO-analysis-of-pdos,TRI-interpolation-function-spaces,WO-pseudodifferential-operatros}.

\subsection{Quotient distances with respect to isometry groups}\label{sec:quotientDist}
It is well-known that there exist many operators $P$ such that \begin{equation}\label{eq:phaseProp}\abs{\hat{f}} = \abs{\widehat{Pf}}.\end{equation} Any such operator, not equal to the identity, breaks down the possibility for uniqueness in the Fourier phase problem. One usually says that the following operations are the \emph{trivial ambiguities} for the Fourier phase problem:
\begin{enumerate}[label=(\roman*)]
    \item the multiplication operator $f \mapsto cf(x)$ with $c \in S^1 \subset \C$.
    \item the translation operator $f \mapsto f(x-\tau)$ with $\tau \in \R^n$.
    \item the conjugate reflection operator $f \mapsto \overline{f(-x)}$.
\end{enumerate}
It is easy to check that the above operators have the property \eqref{eq:phaseProp}. We also remark that they all are isometries $L^p(\R^n) \to L^p(\R^n)$ for any $p \in [1,\infty]$ and $H^s(\R^n) \to H^s(\R^n)$ for any $s \in \R$. See also the recent survey \cite{GKR20-PhaseRetSurvey} which discusses in detail the uniqueness and stability of the Fourier phase problem.

Here we define a general setting for including such obstructions of uniqueness in the estimates. This of course leads to defining quotient distances. The right quotient distance may depend on the context. Therefore, we build our theory here such that it is possible to adapt into different cases. 

Let us denote by $\mathcal{B}^s(\R^n)$ the set of bijective operators $H^s(\R^n) \to H^s(\R^n)$ that are additive but not necessarily linear. Let us further denote by $\mathcal{F}^s(\R^n) \subset \mathcal{B}^s(\R^n)$ the subgroup of operators $P \in \mathcal{B}^s(\R^n)$ with the property \begin{equation}\label{eq:FourMagInv} \abs{\hat{f}(\xi)} = \abs{\widehat{Pf}(\xi)} \quad \forall \xi \in \R^n.\end{equation} It follows from Parseval's identity that any bijective and additive $P: H^s(\R^n) \to H^s(\R^n)$ with the property \eqref{eq:FourMagInv} must be an isometry, and it is easy to check that $\mathcal{F}^s(\R^n)$ is a group. Note that these operators do not have to be necessarily linear. For example, the conjugate reflection operator is a conjugate-linear isometry with the property \eqref{eq:FourMagInv}.

We will make use of subgroups $G \subset \mathcal{F}^s (\R^n)$, i.e. subsets with the following property
\begin{enumerate}
    \item If $P, Q \in G$, then $P^{-1}Q \in G$.
\end{enumerate}
We define the quotient space $X^s(G) := H^s(\R^n)/G$ using the equivalence $[f] = [g]$ if and only if $f=Pg$ for some $P \in G$. The space $X^s(G)$ is equipped with the quotient \emph{pseudometric}
\begin{equation}
    d([f],[g]) := \inf_{P,Q \in G} \norm{Pf-Qg}_{H^s(\R^n)}.
\end{equation}
Since $G$ is a group, $P^{-1} \in G$ is an isometry when $P \in G$, and we have for any $P,Q \in G$ that \begin{equation}
    \norm{Pf-Qg}_{H^s(\R^n)} = \norm{P^{-1}(Pf-Qg)}_{H^s(\R^n)} = \norm{f-P^{-1}Qg}_{H^s(\R^n)}
\end{equation}
where $P^{-1}Q \in G$. The second equality was the point where we used the additivity assumption. One can now conclude the identity \begin{equation}d([f],[g]) = \inf_{P \in G} \norm{f-Pg}_{H^s(\R^n)}.\end{equation} In the presence of certain compactness assumptions on $G$, the pseudometric $d$ would be a metric and $d([f],[g])=0$ if and only if $f = Pg$ for some $P \in G$. For instance, when $G = S^1$ (acting as a scalar multiplication) or finite.

See Appendix \ref{sec:genQuotientL2} for a simple example of how the stability of the Fourier phase problem with respect to quotient metrics may become almost trivial if the group $G$ is taken to be very large.

\section{Proof of the main theorem}
\label{sec:mainthm}

\subsection{Preparation}
Let $f,g \in \mathcal{S}'(\R^n)$. Let us define the following subsets of $\R^n$:
\begin{equation}
    \begin{split}
        A_{f \cap g} &:= \supp(\hat{f})\cap \supp(\hat{g}),\quad
        A_{f \cup g} := \supp(\hat{f})\cup \supp(\hat{g}) \\
        A_{f\setminus g} &:= \supp(\hat{f})\setminus \supp(\hat{g}),\quad\quad\,
        A_f := \supp(\hat{f}).
    \end{split}
\end{equation}
Clearly, \begin{equation}\label{eq:disjointness}
    \R^n = A_{f\cap g} \cup A_{f\setminus g} \cup A_{g \setminus f} \cup A_{f \cup g}^c
\end{equation}
as a disjoint union.

If $A \subset \R^n$, we denote the charateristic function of the set $A$ by $\chi_A$. We define the associated Fourier multiplier $M_A$ by the formula \begin{equation}M_A := \mathcal{F}^{-1}\circ \chi_A \mathcal{F}\end{equation} whenever the definition makes sense. In particularly, if $A$ is a measurable set, then $M_A: H^s(\R^n) \to H^s(\R^n)$ is a bounded linear map with the operator norm at most $1$ for any $s \in \R$.

For the sets $A_{f \cap g}$, $A_{f \cup g}$, $A_{f\setminus g}$ and $A_f$, we denote the related Fourier multipliers as $M_{f \cap g}$, $M_{f \cup g}$, $M_{f\setminus g}$ and $M_f$ respectively.

\subsection{Proof of Theorem \ref{thm:HsBesselGroupthm}}

We first state a simple estimate to be used later. The proof is rather straightforward and thus omitted.

\begin{lemma}\label{lem:H^slemma} Let $n \in \Z_+$ and $s \in \R$. Suppose that $f,g \in H^s(\R^n)$. Then
\begin{equation}\norm{f-g}_{H^s(\R^n)}^2 \leq  \norm{\vev{\xi}^{s}(\abs{\hat{f}}-\abs{\hat{g}})}_2^2+\norm{M_{f\cap g}(f-g)}_{H^s(\R^n)}^2.
\end{equation}
\end{lemma}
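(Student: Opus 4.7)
The plan is to unfold the Fourier-side definition $\norm{u}_{H^s(\R^n)}^2=\int_{\R^n}\vev{\xi}^{2s}\abs{\hat{u}(\xi)}^2\,d\xi$ and to split the integration domain using the disjoint decomposition \eqref{eq:disjointness}. On $A_{f\cup g}^c$ both $\hat{f}$ and $\hat{g}$ vanish almost everywhere (this set lies in the open complement of each of the two supports), so that region contributes nothing to $\norm{f-g}_{H^s(\R^n)}^2$.

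On $A_{f\setminus g}$ one has $\hat{g}=0$ a.e., so pointwise a.e.\ $\abs{\hat{f}-\hat{g}}=\abs{\hat{f}}=\abs{\abs{\hat{f}}-\abs{\hat{g}}}$, and the analogous identity holds on $A_{g\setminus f}$ with the roles reversed. Consequently the contribution of $A_{f\setminus g}\cup A_{g\setminus f}$ to $\norm{f-g}_{H^s(\R^n)}^2$ reads $\int_{A_{f\setminus g}\cup A_{g\setminus f}}\vev{\xi}^{2s}(\abs{\hat{f}}-\abs{\hat{g}})^2\,d\xi$, which I would bound by $\norm{\vev{\xi}^s(\abs{\hat{f}}-\abs{\hat{g}})}_2^2$ by extending the domain of integration back to $\R^n$ and invoking nonnegativity of the integrand.

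The remaining contribution from $A_{f\cap g}$ is $\int_{A_{f\cap g}}\vev{\xi}^{2s}\abs{\hat{f}-\hat{g}}^2\,d\xi=\norm{\vev{\xi}^s\chi_{A_{f\cap g}}(\hat{f}-\hat{g})}_2^2$, and by the very definition of the Fourier multiplier $M_{f\cap g}=\mathcal{F}^{-1}\chi_{A_{f\cap g}}\mathcal{F}$ this equals $\norm{M_{f\cap g}(f-g)}_{H^s(\R^n)}^2$. Summing the two bounds produces the claimed inequality.

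There is no real obstacle in this proof: it is essentially a pointwise bookkeeping exercise on the four pieces of \eqref{eq:disjointness}. The one substantive observation doing all the work is that on a set where one of the Fourier transforms vanishes a.e., the full complex difference $\abs{\hat{f}-\hat{g}}$ already coincides with the phaseless difference $\abs{\abs{\hat{f}}-\abs{\hat{g}}}$; this is precisely what lets the off-diagonal frequency content be absorbed into the phaseless $L^2$-type term, leaving only the common-support piece which is captured exactly by $M_{f\cap g}$.
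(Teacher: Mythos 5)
Your proof is correct, and since the paper explicitly omits the proof of this lemma as ``rather straightforward,'' your argument---splitting the Fourier-side integral along the disjoint decomposition \eqref{eq:disjointness}, observing that off the common support the complex difference $\abs{\hat{f}-\hat{g}}$ coincides a.e.\ with $\abs{\abs{\hat{f}}-\abs{\hat{g}}}$, and identifying the $A_{f\cap g}$ contribution with $\norm{M_{f\cap g}(f-g)}_{H^s(\R^n)}^2$---is evidently the intended one. No gaps.
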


We can now prove our main lemma for Theorem \ref{thm:HsBesselGroupthm}. We also remark that Lemma \ref{lem:HsBesselthm} corresponds to the case $G = \{\id\}$ in the statement of Theorem \ref{thm:HsBesselGroupthm}.

\begin{lemma}\label{lem:HsBesselthm} Let $n \in \Z_+$, $t, s\in \R$ and $p \in [1,2]$. Suppose that $f,g \in H^s(\R^n)$ and $f-g \in H^{t,p}(\R^n)$. Then
\begin{equation}
    \norm{f-g}_{H^s(\R^n)}^2 \leq \norm{\vev{\xi}^s(|\hat{f}|-\abs{\hat{g}})}_2^2 + c_{n,p}\norm{\chi_{A_{f \cap g}}\vev{\xi}^{2s-2t}}_{\frac{p}{2-p}}\norm{f-g}_{H^{t,p}(\R^n)}^2
\end{equation}
where $c_{n,p} := \norm{\mathcal{F}}_{L^p \to L^{p/(p-1)}}^2 \leq 1$.

The constant $\norm{\chi_{A_{f \cap g}}\vev{\xi}^{2s-2t}}_{\frac{p}{2-p}}$ is finite if one of the following sufficient conditions hold:
\begin{enumerate}[label=(\roman*)]
\item $A_{f \cap g}$ is compact.
\item $A_{f \cap g}$ has finite measure and $s \leq t$. 
\item $s < t-a$ where $a = n(\frac{1}{p}-\frac{1}{2})$.
\end{enumerate}
\end{lemma}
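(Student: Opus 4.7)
The plan is to start from Lemma \ref{lem:H^slemma}, which already accounts for the Fourier magnitude term $\norm{\vev{\xi}^s(|\hat{f}|-|\hat{g}|)}_2^2$; the entire task reduces to bounding $\norm{M_{f\cap g}(f-g)}_{H^s(\R^n)}^2$ by the Bessel potential expression appearing on the right-hand side. Unpacking the $H^s$ norm on the Fourier side, this quantity is an integral of $\vev{\xi}^{2s}\chi_{A_{f\cap g}}|\hat{f}-\hat{g}|^2$, and I would rewrite it by inserting $\vev{\xi}^{-2t}\vev{\xi}^{2t}$ so that the weight $\vev{\xi}^{2s-2t}\chi_{A_{f\cap g}}$ separates from the factor $|\vev{\xi}^t(\hat{f}-\hat{g})|^2$.

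The core step is then a single H\"older inequality with the conjugate pair $(\tfrac{p}{2-p},\tfrac{p'}{2})$, where $p'=p/(p-1)$. This pairing is engineered so that the weight factor produces exactly the constant $\norm{\chi_{A_{f\cap g}}\vev{\cdot}^{2s-2t}}_{p/(2-p)}$ demanded by the statement, while the remaining factor becomes $\norm{\vev{\cdot}^t(\hat{f}-\hat{g})}_{p'}^2$. Recognising $\vev{\cdot}^t(\hat{f}-\hat{g})$ as the Fourier transform of the function whose $L^p$ norm is $\norm{f-g}_{H^{t,p}(\R^n)}$, I would close the argument with the Hausdorff--Young inequality applied to the inverse Fourier transform (whose $L^p\to L^{p'}$ operator norm equals that of $\mathcal{F}$), which yields the factor $c_{n,p}$ after squaring.

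Finally I would verify the three finiteness criteria for the weight. Case (i) is immediate since $\vev{\xi}^{2s-2t}$ is bounded on compact sets. In case (ii), $s\le t$ forces $\vev{\xi}^{2s-2t}\le 1$ pointwise, and the finite measure of $A_{f\cap g}$ gives an $L^{p/(2-p)}$ bound by $|A_{f\cap g}|^{(2-p)/p}$, with the endpoint $p=2$ handled via the convention $\tfrac{2}{2-2}=\infty$. In case (iii), integrability of $\vev{\cdot}^{(2s-2t)p/(2-p)}$ over $\R^n$ requires $(2t-2s)\tfrac{p}{2-p}>n$, which rearranges exactly to $t-s>a=n(\tfrac{1}{p}-\tfrac{1}{2})$.

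I do not anticipate a serious obstacle: the proof is essentially a rerun of the standard Fourier-analytic derivation of the Sobolev embedding \eqref{eq:firstSob}, the key twist being that the restriction to $A_{f\cap g}$ supplied by Lemma \ref{lem:H^slemma} allows the weight constant to depend on this intersection rather than on all of $\R^n$. The one place requiring genuine care is the identification of the correct H\"older exponent pair, so that the constant appearing in the estimate matches the one stated and so that the endpoint $p=2$ is handled consistently with the paper's conventions.
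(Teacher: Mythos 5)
Your proposal is correct and follows essentially the same route as the paper's proof: reduce via Lemma \ref{lem:H^slemma}, split the weight $\vev{\xi}^{2s}=\vev{\xi}^{2s-2t}\vev{\xi}^{2t}$ on $A_{f\cap g}$, apply H\"older with the conjugate pair $(\tfrac{p}{2-p},\tfrac{p'}{2})$, and finish with Hausdorff--Young; the finiteness checks (i)--(iii) also match. The only cosmetic difference is that the paper phrases the Hausdorff--Young step as $\norm{\vev{\cdot}^t\hat{h}}_{p'}\le\norm{\mathcal{F}}_{L^p\to L^{p'}}\norm{h}_{H^{t,p}(\R^n)}$ for $h=f-g$, which is exactly your observation about recognising $\vev{\cdot}^t(\hat f-\hat g)$ as a Fourier transform of an $L^p$ function.
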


\begin{proof}
We have by Lemma \ref{lem:H^slemma} that
\begin{equation}\norm{f-g}_{H^s(\R^n)}^2 \leq  \norm{\vev{\xi}^{s}(\abs{\hat{f}}-\abs{\hat{g}})}_2^2+\norm{M_{f\cap g}(f-g)}_{H^s(\R^n)}^2.
\end{equation} The rest of the argument is to control the term $\norm{M_{f\cap g}(f-g)}_{H^s(\R^n)}^2$. 

When $p \in [1,2]$, it follows that \begin{equation}\label{eq:continuity-of-Fourier}\norm{\vev{\cdot}^t \hat{f}}_{p'} \leq \norm{\mathcal{F}}_{L^p \to L^{p'}} \norm{f}_{H^{t,p}(\R^n)}\end{equation} where $p'$ is the Hölder conjugate of $p$. We have by Hölder's inequality that
\begin{equation}\label{eq:hölder-inequality}\int_{A_{f \cap g}}\vev{\xi}^{2s} \abs{\hat{f}-\hat{g}}^2d\xi \leq \norm{\chi_{A_{f \cap g}}\vev{\xi}^{2s-\delta}}_{q'}\norm{\vev{\xi}^\delta \abs{\hat{f}-\hat{g}}^2}_{q}\end{equation}
for any Hölder conjugates $1 =\frac{1}{q}+\frac{1}{q'}$ and any $\delta \in \R$. We fix $\delta$ and $q$ later so that we may estimate $\norm{\vev{\xi}^\delta \abs{\hat{f}-\hat{g}}^2}_{q}$ with the $H^{t,p}$ norm. To do so, we use the identity \begin{equation}\norm{\vev{\xi}^\delta \abs{\hat{f}-\hat{g}}^2}_{q}=\norm{\vev{\xi}^{\delta/2} \abs{\hat{f}-\hat{g}}}_{2q}^2.\label{eq:square-simple-identity}
\end{equation}

Choose $q=\frac{p'}{2}$ where $p'$ is the Hölder conjugate of $p$. Then we may calculate that $q' = \frac{p}{2-p}$. Moreover, we want that $\delta/2 \leq t$, and for our purposes the optimal choice is $\delta = 2t$. Therefore, we obtain from \eqref{eq:continuity-of-Fourier}, \eqref{eq:hölder-inequality} and \eqref{eq:square-simple-identity} with $\delta=2t$ and $q=\frac{p'}{2}$, the estimate
\begin{equation}\norm{M_{f\cap g}(f-g)}_{H^s(\R^n)}^2 \leq c_{n,p}\norm{\chi_{A_{f \cap g}}\vev{\xi}^{2s-2t}}_{\frac{p}{2-p}}\norm{f-g}_{H^{t,p}(\R^n)}^2.\end{equation}

Finally, we note that $s < t-n(\frac{1}{p}-\frac{1}{2})$ if and only if $\norm{\vev{\xi}^{2s-2t}}_{\frac{p}{2-p}} < \infty$. Also, if $A_{f \cap g}$ has finite measure and $s \leq t$, or $A_{f \cap g}$ is compact, then $\norm{\chi_{A_{f \cap g}}\vev{\xi}^{2s-2t}}_{\frac{p}{2-p}}$ is finite for any $p \in [1,2]$. This completes the proof.
\end{proof}

We are now ready to prove our main theorem in this note.

\begin{proof}[Proof of Theorem \ref{thm:HsBesselGroupthm}] Suppose that $p \in [1,2]$ and $t,s \in \R$. Let $P, Q \in G$. If $Pf-Qg \notin H^{t,p}(\R^n)$ and $A_{f \cap g}$ has positive measure, then the estimate \begin{equation}
    d([f],[g])^2 \leq \norm{\vev{\xi}^s(|\hat{f}|-\abs{\hat{g}})}_2^2 + c_{n,p}\norm{\chi_{A_{f \cap g}}\vev{\xi}^{2s-2t}}_{\frac{p}{2-p}}\norm{Pf-Qg}_{H^{t,p}(\R^n)}^2
\end{equation} holds trivially as the right-hand side is infinite (by the convention used in this note). If $A_{f \cap g}$ has measure zero, then $ d([f],[g])^2 \leq \norm{\vev{\xi}^s(|\hat{f}|-\abs{\hat{g}})}_2^2$ by a simple application of Lemma \ref{lem:H^slemma} and the definition of $G$. Hence, we may suppose that $Pf-Qg \in H^{t,p}(\R^n)$ without the loss of generality.

It follows from the definition of $G$ that $\abs{\widehat{Pf}}=\abs{\hat{f}}$, $\abs{\widehat{Qg}}=\abs{\hat{g}}$, and $A_{Pf\cap Qg} = A_{f\cap g}$. Since $Pf$ and $Qg$ satisfy the assumptions of Lemma \ref{lem:HsBesselthm}, we get
\begin{equation}\begin{split}
    \norm{Pf-Qg}_{H^s(\R^n)}^2
    &\leq \norm{\vev{\xi}^s(|\widehat{f}|-\abs{\widehat{g}})}_2^2 \\
    &\quad+ c_{n,p}\norm{\chi_{A_{f \cap g}}\vev{\xi}^{2s-2t}}_{\frac{p}{2-p}}\norm{Pf-Qg}_{H^{t,p}(\R^n)}^2.
    \end{split}
\end{equation}
We may now take $\inf_{P,Q \in G}$ from both sides to obtain the desired estimate.\end{proof}

\appendix
\section{On estimates using only Fourier magnitude difference data}
\label{sec:appendix1}

In this appendix, we consider conditional estimates that only use the Fourier magnitude difference data with a constant depending on one of the functions. The main point is to illustrate that such estimates are possible, as expected, but they seem to hold for somewhat trivial reasons. Let $r \in [0,1)$ and $h \in H^s(\R^n)$. We define the following two sets
\begin{equation}X^s(h;r) := \{\, g\in H^s(\R^n)\,;\, \norm{M_{h \cap g}(h-g)}_{H^s(\R^n)} \leq r\norm{h-g}_{H^s(\R^n)}\, \}.\end{equation}
and
\begin{equation}
\begin{split}
X^s(r) :=\,&\{\, (f,g) \in H^s(\R^n) \times H^s(\R^n)\,;\,\\
&\quad \norm{M_{f\cap g}(f-g)}_{H^s(\R^n)} \leq r\norm{f-g}_{H^s(\R^n)}\,\}.
\end{split}
\end{equation}
The parameter $r$ measures how much the functions $f$ and $g$ are disjoint in the suitable $H^s$ sense. We will also calculate the value of $r$ for each given pair of functions in the end. We omit the proofs in this section as those are rather elementary.

\begin{lemma}\label{lem:Hsstability} Let $n \in \Z_+$, $s \in \R$ and $r \in [0,1)$. Let $f \in H^s(\R^n)$.
If $g \in X^s(f;r)$, then 
\begin{equation}\label{eq:stabestimate}
\norm{f-g}_{H^s(\R^n)}^2 \leq C\norm{\vev{\xi}^s(|\hat{f}|-\abs{\hat{g}})}_2^2.
\end{equation}
where $C = 1/(1-r^2)$. The same estimate holds for the pairs of functions $(f,g) \in X^s(r)$.
\end{lemma}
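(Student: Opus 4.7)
The plan is to apply Lemma \ref{lem:H^slemma} directly and then absorb the $M_{f \cap g}$ term into the left-hand side using the hypothesis that $g \in X^s(f;r)$ (or $(f,g) \in X^s(r)$). Since the hypothesis says exactly that the ``overlapping Fourier support'' part of $f-g$ is a contraction with factor $r$ of the whole, a simple rearrangement should close the argument.

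Concretely, I would first invoke Lemma \ref{lem:H^slemma} to get
\begin{equation}
\norm{f-g}_{H^s(\R^n)}^2 \leq \norm{\vev{\xi}^{s}(\abs{\hat{f}}-\abs{\hat{g}})}_2^2+\norm{M_{f\cap g}(f-g)}_{H^s(\R^n)}^2.
\end{equation}
Then, using the defining inequality of $X^s(f;r)$, I would replace the last term by $r^2 \norm{f-g}_{H^s(\R^n)}^2$, subtract this from both sides, and divide through by $1-r^2 > 0$ (which is valid because $r \in [0,1)$). This yields the claimed constant $C = 1/(1-r^2)$. The argument for $(f,g) \in X^s(r)$ is identical since only the same ``self-bounding'' inequality is used and nothing about $f$ versus $g$ is special.

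There is essentially no obstacle here: the entire content is an absorption trick, and the assumption $r < 1$ is precisely what makes the absorption legal. The only thing to be slightly careful about is that $\norm{f-g}_{H^s(\R^n)}$ is finite so that the subtraction is meaningful; but this is immediate since $f, g \in H^s(\R^n)$ by assumption, so the rearrangement is well-defined and the proof is complete.
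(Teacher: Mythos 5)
Your proof is correct: the absorption of the $\norm{M_{f\cap g}(f-g)}_{H^s(\R^n)}^2$ term via the defining inequality of $X^s(f;r)$ (resp.\ $X^s(r)$), followed by subtraction and division by $1-r^2>0$, is exactly the elementary argument the paper omits, and your remark that finiteness of $\norm{f-g}_{H^s(\R^n)}$ justifies the rearrangement is the only point requiring care. Nothing further is needed.
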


We next consider when such $r \in [0,1)$ exists and how to estimate $r$. We first remark that if $\supp(\hat{f}) = \supp(\hat{g})$, then the inequality associated with $X^s(f;r)$ or $X^s(r)$ holds only if $r = 1$, and one cannot use this simple argument to obtain the estimate \eqref{eq:stabestimate}. The content of the next lemma is that such an estimate always holds if $\supp(\hat{f}) \neq \supp(\hat{g})$.
\begin{lemma}\label{lem:Hsstability2} Let $n \in \Z_+$ and $s \in \R$. Let $f, g \in H^s(\R^n)$ be such that $\supp(\hat{f}) \neq \supp(\hat{g})$. Then for any $r \in [r_0,1)$ it holds that $(f,g) \in X^s(r)$ where
\begin{equation}
    r_0 = \sqrt{1-\frac{\norm{M_{f\setminus g}f}_{H^s(\R^n)}^2 + \norm{M_{g\setminus f}g}_{H^s(\R^n)}^2}{\norm{f-g}_{H^s(\R^n)}^2}}.
\end{equation}
Moreover, $(f,g) \notin X^s(r)$ when $r \in [0,r_0)$.
\end{lemma}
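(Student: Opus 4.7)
The plan is to compute $\norm{M_{f\cap g}(f-g)}_{H^s(\R^n)}^2$ exactly in terms of the other quantities in the statement, using the orthogonal decomposition of $\R^n$ into the four disjoint sets $A_{f\cap g}$, $A_{f\setminus g}$, $A_{g\setminus f}$ and $A_{f\cup g}^c$ recorded in \eqref{eq:disjointness}. On $A_{f\cup g}^c$ both $\hat{f}$ and $\hat{g}$ vanish; on $A_{f\setminus g}$ we have $\hat{g}=0$ so $\widehat{f-g}=\hat{f}$; on $A_{g\setminus f}$ we have $\widehat{f-g}=-\hat{g}$; and on $A_{f\cap g}$ both terms contribute. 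Writing the $H^s$-norm as a weighted $L^2$-integral against $\vev{\xi}^{2s}$ and summing over these pieces, I obtain the identity
\begin{equation}
    \norm{f-g}_{H^s(\R^n)}^2 = \norm{M_{f\cap g}(f-g)}_{H^s(\R^n)}^2 + \norm{M_{f\setminus g}f}_{H^s(\R^n)}^2 + \norm{M_{g\setminus f}g}_{H^s(\R^n)}^2.
\end{equation}

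The assumption $\supp(\hat{f}) \neq \supp(\hat{g})$ guarantees that $f \neq g$, so $\norm{f-g}_{H^s(\R^n)} > 0$ and I may divide through. Rearranging gives
\begin{equation}
    \frac{\norm{M_{f\cap g}(f-g)}_{H^s(\R^n)}^2}{\norm{f-g}_{H^s(\R^n)}^2} = 1 - \frac{\norm{M_{f\setminus g}f}_{H^s(\R^n)}^2 + \norm{M_{g\setminus f}g}_{H^s(\R^n)}^2}{\norm{f-g}_{H^s(\R^n)}^2} = r_0^2,
\end{equation}
that is, $\norm{M_{f\cap g}(f-g)}_{H^s(\R^n)} = r_0 \norm{f-g}_{H^s(\R^n)}$. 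Once this exact identity is in hand, both assertions of the lemma follow: $(f,g) \in X^s(r)$ whenever $r \geq r_0$, and the reverse strict inequality fails whenever $r < r_0$.

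The only nontrivial point to check is that $r_0 < 1$, equivalently that at least one of the terms $\norm{M_{f\setminus g}f}_{H^s(\R^n)}^2$ or $\norm{M_{g\setminus f}g}_{H^s(\R^n)}^2$ is strictly positive. Since $\supp(\hat{f}) \neq \supp(\hat{g})$, without loss of generality there is a point $\xi_0 \in \supp(\hat{f}) \setminus \supp(\hat{g})$. By the definition of support, $\hat{g}$ vanishes a.e.\ on some open neighborhood $U$ of $\xi_0$ while $\hat{f}$ does not vanish a.e.\ on any neighborhood of $\xi_0$; hence $A_{f\setminus g}\cap U$ has positive measure on which $\hat{f}$ is not a.e.\ zero, forcing $\norm{M_{f\setminus g}f}_{H^s(\R^n)}^2 > 0$. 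This is the main (and essentially only) technical obstacle, and it is handled by unwinding the definition of $\supp(\hat{f})$ as the complement of the largest open set on which $\hat{f}$ vanishes a.e.
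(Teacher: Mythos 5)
Your proof is correct: the Plancherel/orthogonality decomposition of $\norm{f-g}_{H^s(\R^n)}^2$ over the four disjoint sets in \eqref{eq:disjointness} yields the exact identity $\norm{M_{f\cap g}(f-g)}_{H^s(\R^n)} = r_0\norm{f-g}_{H^s(\R^n)}$, from which both assertions follow, and your verification that $r_0<1$ via the definition of essential support is the right way to handle the only delicate point. The paper omits this proof as ``rather elementary,'' and your argument is evidently the intended one.
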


One could derive similar estimates for the spaces $X^s(f;r)$ and $X^s(g;r)$ but we will omit this.

\begin{remark}\label{rmk:Hsstability} Let $n \in \Z_+$ and $s \in \R$. Let $G \subset \mathcal{F}^s (\R^n)$ be a subgroup. If $f,g \in H^s(\R^n)$ are such that $\supp(\hat{f}) \neq \supp(\hat{g})$, then Lemmas \ref{lem:Hsstability} and \ref{lem:Hsstability2} imply
\begin{equation}\label{eq:stabestimate2}
d([f],[g])^2 \leq \frac{d([f],[g])^2}{\norm{M_{f\setminus g}f}_{H^s(\R^n)}^2 + \norm{M_{g\setminus f}g}_{H^s(\R^n)}^2}\norm{\vev{\xi}^s(|\hat{f}|-\abs{\hat{g}})}_2^2.
\end{equation}
This in fact reveals that the estimate of this type reduces to a trivial property \begin{equation}\frac{\norm{\vev{\xi}^s(|\hat{f}|-\abs{\hat{g}})}_2^2}{\norm{M_{f\setminus g}f}_{H^s(\R^n)}^2 + \norm{M_{g\setminus f}g}_{H^s(\R^n)}^2}\geq 1.\end{equation} One cannot even hope to do any better with this approach since the terms $\norm{M_{f\setminus g}f}_{H^s(\R^n)}^2$ and $\norm{M_{g\setminus f}g}_{H^s(\R^n)}^2$ are invariant under the operations by $G$. On the other hand, when $\supp(\hat{f}) \to \supp(\hat{g})$, one easily notes that the constant is not well-controlled even if $d([f],[g])$ is a priori assumed to be bounded.
\end{remark}

\section{Example of quotient distances and the Fourier phase problem}\label{sec:genQuotientL2}
We illustrate here that the Fourier phase problem might not be mathematically very interesting if the function space is large and the quotient distance is taken with respect to most of the possible sources of non-uniqueness. Let us define that $[f]=[g]$ if and only if $\abs{\hat{f}}=\abs{\hat{g}}$. Let the corresponding $L^2$ distance be defined as $d([f],[g]) =\inf_a \norm{f-M_ag}_2$ where $M_a$ is the Fourier multiplier associated with a measurable function $a: \R^n \to S^1$. Note that automatically any measurable $a$ with the range $S^1$ generates an $L^2$ Fourier multiplier keeping the Fourier magnitude data invariant. 

Write \begin{equation}A = \{\,\xi \in \R^n\,;\, \hat{f}(\xi) = 0 \text{ or } \hat{g}(\xi)=0\,\}.\end{equation} For any fixed $\xi \in \R^n$ with $\hat{f}(\xi)\neq0$, one has
\begin{equation}
    \abs{\hat{f}-\hat{g}}=\abs{\hat{f}}^{-1}\abs{\abs{\hat{f}}^2-\hat{g}\overline{\hat{f}}}
\label{eq:smallestimate}
\end{equation}
This holds especially if $\xi \in \R^n \setminus A$. Let us therefore define the mapping $a: \R^n \to S^1$ as \[a=\chi_{\R^n \setminus A}\hat{f}\abs{\hat{f}}^{-1}\overline{\hat{g}}\abs{\hat{g}}^{-1} + \chi_A.\] 
Using \eqref{eq:smallestimate}, we easily obtain the identity
\[\norm{f-M_ag}_2 = \norm{\abs{\hat{f}}-\abs{\hat{g}}}_2.\] Since $M_a$ is one of the suitable Fourier multipliers for any $f,g \in L^2(\R^n)$, we find eventually that
\[d([f],[g]) = \norm{\abs{\hat{f}}-\abs{\hat{g}}}_2\]
holds for all $f,g\in L^2(\R^n)$. The same principles hold of course on $H^s(\R^n)$.

\end{document}